\theoremstyle{plain}
\newtheorem{theorem}{Theorem}[section]
\newtheorem{lemma}[theorem]{Lemma}
\newtheorem{corollary}[theorem]{Corollary}
\theoremstyle{definition}
\newtheorem{definition}[theorem]{Definition}
\theoremstyle{remark}
\newtheorem{conjecture}[theorem]{Conjecture}
\newcommand{\E}{\textnormal{\textsf E}\,}
\newcommand{\R}{\mathbb{R}}
\newcommand{\Vol}{\mathop{\mathrm{Vol}}\nolimits}
\newcommand{\conv}{{\mathrm{conv}}}
\newcommand{\osc}{\mathop{\mathrm{osc}}\nolimits}
\begin{document}

\author[M.~Dospolova]{Maria Dospolova}
\address{Maria Dospolova, St.~Petersburg Department of Steklov Institute of Mathematics, Fontanka~27, 191011 St.~Petersburg, Russia}
\email{dospolova.maria@yandex.ru}

\author[M.~Germanskov]{Mikhail Germanskov}
\address{Mikhail Germanskov, St.~Petersburg Department of Steklov Institute of Mathematics, Fontanka~27, 191011 St.~Petersburg, Russia}
\email{mgermanskov@gmail.com}

\author[D.~Zaporozhets]{Dmitry Zaporozhets}
\address{Dmitry Zaporozhets, St.~Petersburg Department of Steklov Institute of Mathematics, Fontanka~27, 191011 St.~Petersburg, Russia}
\email{zap1979@gmail.com}

\title{On Steiner entire function}
\keywords{Steiner entire function, intrinsic volumes, Gaussian-bounded convex sets, entire functions of finite order, Gaussian continuity, Wiener spiral, Blaschke problem, Hilbert space geometry, Gaussian integrals, Hadamard factorization
}
\subjclass[2010]{52A05, 30D15, 60G15}
\thanks{This work was supported by the Ministry of Science and Higher Education of the Russian Federation (agreement 075-15-2025-344 dated 29/04/2025 for Saint Petersburg Leonhard Euler International Mathematical. The work of MD and DZ  was supported by the Theoretical Physics and Mathematics Advancement Foundation
``BASIS''}

\begin{abstract}
We introduce and study the Steiner entire function, an analytic generating function for the intrinsic volumes of a convex compact set in a Hilbert space. This function extends the classical Steiner polynomial to infinite dimensions and encodes key geometric information about the set. We establish fundamental results on its order, type, and canonical product representation, and show how its analytic growth properties characterize Gaussian continuity. In particular, we provide new criteria for this property in terms of entire function theory, disprove a conjecture of Gao and Vitale, and conjecture a new characterization of admissible intrinsic volume sequences in infinite dimensions.
\end{abstract}

\maketitle

\section{Introduction}
The intrinsic volumes, $V_k(K)$, of a convex body $K \subset \R^n$ are fundamental geometric quantities that generalize the notions of volume, surface area, and mean width. They appear as coefficients in the classic Steiner formula, which states that the volume of the Euclidean neighborhood of $K$ of radius $\lambda\ge0$ is a polynomial in $\lambda$:
\begin{equation}\label{eq:Steiner}
\Vol_d(K + \lambda B^d) = \sum_{k=0}^d \kappa_{d-k}\,V_k(K)\,\lambda^{\,d-k}, \qquad \lambda\ge0,
\end{equation}
where $B^d$ is the unit $d$-ball, $\kappa_j=\Vol_j(B^j)=\pi^{j/2}/\Gamma(\frac{j}{2}+1)$, and $V_0(K),\dots, V_d(K)$ are the (normalized) \emph{intrinsic volumes} of $K$. The values $V_k(K)$ serve as key size parameters of $K$: for example, $V_d(K)$ is the volume of $K$, $V_{d-1}(K)$ is half the surface area, $V_1(K)$ is, up to a constant, the mean width, and $V_0(K) = 1$. By definition, $V_k(K) = 0$ for $k > \dim K$. The normalization in \eqref{eq:Steiner} is chosen so that $V_k(K)$ are intrinsic to $K$, i.e., independent of the ambient dimension.

Thanks to this invariance, these notions have been extended to infinite-dimensional convex sets by Sudakov \cite{Sud76} and Chevet \cite{Chev76}. Let $H$ be a separable (real) Hilbert space.  For a convex compact $K$, the numbers $V_k(K)$ can be defined by an approximation procedure: if $K'$ ranges over all finite-dimensional compact convex subsets of $K$, one sets 
\[ V_k(K) \;:=\; \sup_{K'\subset K} V_k(K') \;\in [0,\infty]. \] 
This agrees with the finite-dimensional definition when $K$ itself is finite-dimensional. 

Note that for an infinite-dimensional convex compact $K$, the intrinsic volumes might be infinite. In fact, if $V_1(K)<\infty$ then automatically $V_k(K)<\infty$ for all $k$ (as shown by Sudakov~\cite[Theorem~1]{Sud76}), whereas if $V_1(K)=\infty$ then $V_k(K)=\infty$ for all $k\ge1$. 

We call a convex compact set $K$ \emph{Gaussian bounded (GB)} if $V_1(K)<\infty$, and hence $V_k(K)\in[0,\infty)$ for all $k\ge0$. This terminology comes from the equivalent probabilistic condition: there exists a version of an \emph{isonormal Gaussian process} on $K$—that is, a centered Gaussian family \( \{\xi(t) : t \in K\} \) with covariance structure given by
\begin{align}\label{eq:iso}
\mathbb{E}[\xi(t)\xi(t')] = \langle t,t' \rangle,
\end{align}
whose sample paths are almost surely bounded (see Section~\ref{sec:Gaussian-proc}). 
In particular, any convex body in $\R^d$ is GB, and many infinite-dimensional $K$ of practical interest are GB. Throughout the paper, when we refer to a set as \emph{Gaussian bounded (GB)}, we implicitly assume that it is both convex and compact.

If $K$ is GB, one can define its \emph{Wills functional} as the sum of all intrinsic volumes: 
\[ W(K) := \sum_{k=0}^\infty V_k(K). \] 
This sum is finite; indeed, Chevet \cite{Chev76} and McMullen~\cite{McM91} proved the inequality 
\begin{equation}\label{eq:Chevet}
V_k(K) \le \frac{(V_1(K))^k}{k!}, \qquad k=0,1,2,\dots,
\end{equation}
for any GB-set. It follows that 
\begin{equation}\label{eq:W-expV1}
W(K) \le \exp(V_1(K)) < \infty,
\end{equation}
so $W(K)$ is always finite for GB-sets. In fact, $W(K)$ admits a striking representation as a Gaussian integral: for any compact convex $K\subset \R^d$,
\begin{equation}\label{eq:Wills-Gauss}
W(K) = \int_{\R^d} \frac{1}{(2\pi)^{d/2}}\exp\!\Big(-\frac{\mathrm{dist}^2(x,K)}{2}\Big)dx,
\end{equation}
where $\mathrm{dist}(x,K)=\inf_{y\in K}\|x-y\|$. Equation~\eqref{eq:Wills-Gauss} (see \cite{Vit01,Bog98} for derivations) shows that $W(K)$ is the Gaussian volume of a \emph{tube} around $K$. Tsirelson~\cite[Theorem~5]{Tsi85} generalized this to infinite dimensions, proving an \emph{infinite-dimensional Steiner formula}: if $K\subset H$ is GB, then for $\lambda\geq0$,
\begin{equation}\label{eq:Tsirelson}
   \E\exp \left({\sup_{t \in K} \left[\sqrt{2\pi}\lambda \xi(t) - \pi\lambda^{2} \|t\|^2%\sigma_{t}^{2} 
      \right]}\right)  = \sum_{k=0}^{\infty} \lambda^k V_{k}(K),\quad \lambda\geq0,
\end{equation}
where $\xi$ is isonormal on $H$, see~\eqref{eq:iso}. In particular, setting $\lambda=1$ in \eqref{eq:Tsirelson} recovers $W(K)$. Equation~\eqref{eq:Tsirelson} serves as a generating function for the intrinsic volumes of $K$.

\subsection*{The Steiner entire function}
For a convex body $K\subset\R^d$, Cifre and  Nicol\'as~\cite{CifNic13} proposed studying the polynomial 
\[ f_K(z) = \sum_{k=0}^d V_k(K)\,z^k, \qquad z\in\mathbb{C}. \] 
They named $f_K(z)$ the \emph{Wills polynomial} of $K$, since $f_K(1)=W(K)$, and investigated its roots. By analogy, for an infinite-dimensional  GB-set $K\subset H$, we introduce:

\begin{definition}
The \emph{Steiner entire function} of $K$ is 
\[ f_K(z) := \sum_{k=0}^\infty V_k(K)\, z^k, \qquad z\in\mathbb{C}. \] 
\end{definition}
The name is motivated by the fact that, for real positive $z$, this function coincides with the right-hand side of the infinite-dimensional Steiner formula~\eqref{eq:Tsirelson}.

This power series has an infinite radius of convergence, as guaranteed by the bound \eqref{eq:Chevet}. Indeed, \eqref{eq:Chevet} implies 
$$
|f_K(z)| \le \sum_{k\ge0} \frac{(V_1(K)|z|)^k}{k!} = \exp(V_1(K)|z|)
$$ 
for all $z\in\mathbb{C}$. In particular, $f_K$ is an \emph{entire function}, i.e., analytic on $\mathbb{C}$). 

The first aim of this work is to investigate fundamental properties of the Steiner entire function $f_K(z)$ and to present concrete examples. The second aim is to connect analytic properties of $f_K$ to geometric properties of $K$. In particular,  we characterize the \emph{Gaussian continuity} of $K$ (a crucial regularity property in convex geometry and Gaussian process theory, see Subsection~\ref{sec:Gaussian-proc}) in terms of the growth of $f_K$: roughly speaking, $K$ is Gaussian continuous if and only if $f_K$ does \emph{not} have full exponential order. This provides an analytic criterion for a geometric and probabilistic phenomenon.

\subsection*{Organization of the paper}
In the next section, we collect the necessary background. Section~3 contains all main results. Subsection~4 presents examples and constructions, and the proofs are located in Section~5.

\section{Preliminaries}\label{sec:prelim}
\subsection{Ultra-log-concave property}

The sequence of intrinsic volumes of a GB-set $K$ is known to be ultra-log-concave, a result established by Favard in finite dimensions and extended to infinite dimensions by Chevet \cite{Chev76} and McMullen~\cite{McM91}. This property, which follows from the deep Alexandrov--Fenchel inequality (see, e.g.,~\cite[Section~4.2]{BZ80} or~\cite[Section~14.3]{Sch14}), is expressed as:
\begin{equation} \label{eq:ultra-log-concave}
V_k(K)^2 \ge \frac{k+1}{k} V_{k+1}(K) V_{k-1}(K), \quad \text{for } k \ge 1.
\end{equation}
This inequality implies \eqref{eq:Chevet} as a simple corollary.

\subsection{Gaussian bounded and Gaussian continuous convex sets}\label{sec:Gaussian-proc}
Let $H$ be a real Hilbert space with inner product $\langle\cdot,\cdot\rangle$ and norm $\|t\|=\sqrt{\langle t,t\rangle}$. We denote by $(\xi(t))_{t\in H}$ an \emph{isonormal Gaussian process} on $H$, that is, a centered Gaussian family with covariance $\mathbb{E}[\xi(t)\xi(t')] = \langle t,t'\rangle$. 
If $e_1,e_2,\ldots$ is an orthonormal basis of $H$ and $t = t_1e_1+t_2e_2+\ldots$, then  $\xi$ can be represented as 
\begin{align*}
\xi(t_1e_1+t_2e_2+\ldots) =t_1N_1+t_2N_2+\ldots,
\end{align*}
where $N_1,N_2,\ldots$ are independent standard Gaussian variables. The series converges almost surely, since $t_1^2+t_2^2+\ldots<\infty$. In $\R^d$, it could be written in the form $\xi(t) = \langle t,N\rangle$, where $N$ is standard Gaussian vector in $\R^d$.

We say $K$ is \emph{Gaussian bounded (GB)} if $\xi$ has a version which is bounded uniformly on $K$ with probability one. This is equivalent to the earlier definition   via $V_1(K)<\infty$, by Sudakov's result~\cite{Sud76}. 

A stronger condition is that the sample paths of $\xi$ can be chosen continuous on $K$. Formally, $K$ is \emph{Gaussian continuous (GC)} if there exists a modification of the process $\{\xi(t): t\in K\}$ which is (almost surely) a continuous function on $K$. 

For GB-sets, Vitale characterized the subclass of GC-sets via the asymptotic behavior of the sequence of ratios of intrinsic volumes. Let
\begin{equation} \label{eq:mk_def}
m_k(K) := \frac{(k+1)V_{k+1}(K)}{V_k(K)}, \quad k \ge 0.
\end{equation}
\noindent
By convention, we set $\frac{0}{0} := 0$, so for finite-dimensional sets $K$, the sequence $m_k(K)$ eventually terminates (i.e., becomes zero) when $V_k(K) = 0$ for $k > \dim K$.

The sequence $\{m_k(K)\}_{k \ge 0}$ is non-increasing due to \eqref{eq:ultra-log-concave}, and thus has a limit (positive or zero).

Vitale showed that, for  a GB-set,
\begin{align}\label{thm:Vitale}
K\text{ is a GC-set if and only if} \lim_{k\to\infty} m_k(K) = 0.
\end{align}
In fact, he did more, connecting this limit with the \emph{oscillation} of $K$.

\subsection{Oscillation}
Let $\xi(t)$ be an isonormal process on a GB-set $K$. It\^o and Nisio~\cite{IN68} showed that there exists a deterministic function $\alpha : K \to [0, \infty]$ such that, with probability one, for all $t \in K$,
\[
\alpha(t) = \lim_{\varepsilon \to 0} \sup \{ |\xi(s) - \xi(s')| : s,s' \in B(t, \varepsilon) \} < \infty,
\]
where $B(t, \varepsilon)$ denotes the open ball of radius $\varepsilon$ centered at $t$.

The process $\xi(t)$ is continuous at a point $t \in K$ if and only if $\alpha(t) = 0$.

We define the \emph{oscillation} of the set 
$K$ as
\[
\osc(K) = \sup_{t \in K} \alpha(t)
\]
Clearly, $\osc(K) = 0$ if and only if $K$ is a $GC$-set. 

Vitale showed that the oscillation of $K$ is determined by its intrinsic volumes:
\begin{align*}
\osc(K) = \lim_{k\to\infty} m_k(K).
\end{align*}
In particular, we recover the GC characterization~\eqref{thm:Vitale} as the special case when $\osc(K)$ = 0.

\subsection{Entire function basics}\label{sec:entire-func}
We recall standard notions from the theory of entire functions (see \cite{Boa54,Lev96} for background). 

A complex-valued function $f(z)$ is said to be \emph{entire} if it is analytic throughout the entire complex plane. Every entire function admits  a globally convergent power series expansion of the form
\begin{equation}
f(z) = \sum_{n=0}^\infty a_n z^n,
\label{eq:entire_series}
\end{equation}
with an infinite radius of convergence.

According to the Cauchy–Hadamard theorem, this implies that the sequence of coefficients satisfies
\begin{equation*}
\lim_{n \to \infty} \sqrt[n]{|a_n|} = 0.
\label{eq:cauchy_hadamard}
\end{equation*}

The \textit{order} $\rho$ of an entire function $f(z)$ is defined as the infimum of all real numbers $\mu$ for which  
\[
f(z) = O\left(e^{|z|^\mu}\right) \quad \text{as } |z| \to \infty.
\]
The order can also be computed via the formula
\[
\rho = \limsup_{r \to \infty} \frac{\ln\ln M_f(r) }{\log r},
\]
where
\[
M_f(r) = \max_{|z| = r} |f(z)|.
\]

A more refined measure of the growth of an entire function of order $\rho$ is given by its \textit{type}.  
The {type} $\sigma$ of an entire function $f(z)$ of order $\rho$ is defined as the infimum of all positive numbers $A$ such that
\begin{align} \label{type}
M_f(r) < e^{A r^{\rho}} \quad \text{as } r \to \infty.
\end{align}
Equivalently, the type $\sigma$ can be expressed as
\[
\sigma = \limsup_{r \to \infty} \frac{\ln M_f(r)}{r^\rho}.
\]

The order $\rho$ of an entire function $f(z)$ given by the power series~\eqref{eq:entire_series} is determined by the formula
\begin{equation}
\rho = \lim_{n \to \infty} \frac{n \ln n}{\ln \left( \frac{1}{|a_n|} \right)}.
\label{eq:order_formula}
\end{equation}

If the function $f(z)$ has finite positive order $0 < \rho < \infty$, then its \emph{type} $\sigma$ is given by
\begin{equation}
(\sigma e \rho)^{1/\rho} = \lim_{n \to \infty} n^{1/\rho} \sqrt[n]{|a_n|}.
\label{eq:type_formula}
\end{equation}

Entire functions of finite order admit a representation in the form of an infinite product. This result is known as Hadamard’s factorization theorem: 
An entire function $f(z)$ of finite order $\rho$ can be written as
\begin{align}\label{eq:Hadamard}
f(z) = z^m e^{P(z)} \prod_{k=1}^\infty \left(1 - \frac{z}{z_k}\right) \exp\left( \frac{z}{z_k} + \cdots + \frac{1}{p} \left(\frac{z}{z_k}\right)^p \right),
\end{align}
where $z_k$ are the nonzero zeros of $f(z)$, $m$ is the multiplicity of the zero at the origin, $P(z)$ is a polynomial of degree $q \le \rho$, and $p$ is the smallest non-negative integer such that the series
\[
\sum_{k=1}^\infty \frac{1}{|z_k|^{p+1}}
\]
converges. The part 
\begin{align*}
g(z) =  \prod_{k=1}^\infty \left(1 - \frac{z}{z_k}\right) \exp\left( \frac{z}{z_k} + \cdots + \frac{1}{p} \left(\frac{z}{z_k}\right)^p \right),
\end{align*}
is called the \emph{canonical product}. The infimum over all $\lambda > 0$ for which the series
\[
\sum_{k=1}^\infty \frac{1}{|z_k|^{\lambda}}
\]
converges is called the \emph{convergence exponent} of the sequence $\{z_k\}$. This quantity can also be expressed as
\[
\lambda = \limsup_{n \to \infty} \frac{\ln n}{\ln |z_n|},
\]
see, e.g.,~\cite[Theorem~3.4]{But14}.

The classical Borel--Levin theorem~\cite[Section~4.3, Theorem~3]{Lev96} establishes that, for canonical products, the convergence exponent coincides with the order of the corresponding entire function. In particular, for the canonical product $g(z)$, we have
\begin{align}\label{eq:orderlimsup}
\rho = \limsup_{n \to \infty} \frac{\ln n}{\ln |z_n|}.
\end{align}

%The non-negative integer $g = \max\{p, q\}$ is called the \textit{genus} of the entire function $f(z)$.  If the order $\rho$ is not an integer, then $g = \lfloor \rho \rfloor$; otherwise, $g = \rho - 1$ or $g = \rho$. 

\section{Main results}
\subsection{Growth and zeros of the Steiner entire function}\label{subsec:growth}
Our first result describes the fundamental growth indicators (order and type) of $f_K(z)$. It shows that $f_K$ behaves at most like an exponential, never faster. This is an infinite-dimensional analog of the finite-degree property of Steiner's polynomial \eqref{eq:Steiner}.

\begin{theorem}\label{thm:order-type}
Let $K\subset H$ be a  GB-set. Then the Steiner entire function $f_K(z)$ has order $\rho(K)\le 1$. If $\rho(K)=1$, then the type satisfies $\sigma(K)\le V_1(K)$. Moreover, the full range of orders $\rho \in [0,1]$ is realizable by suitable choices of $K$.
\end{theorem}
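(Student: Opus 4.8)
The plan is to read off order and type directly from the coefficients $a_k = V_k(K)$ using the Cauchy--Hadamard-type formulas \eqref{eq:order_formula} and \eqref{eq:type_formula}, and to combine these with the ultra-log-concavity \eqref{eq:ultra-log-concave} together with its corollary \eqref{eq:Chevet}. First I would establish $\rho(K)\le 1$. By \eqref{eq:order_formula} the order is $\rho=\lim_{k\to\infty} \frac{k\ln k}{\ln(1/V_k(K))}$, so I need a lower bound on $\ln(1/V_k(K))$ that grows at least like $k\ln k$. This is exactly what \eqref{eq:Chevet} provides: since $V_k(K)\le V_1(K)^k/k!$, we get $\ln(1/V_k(K)) \ge \ln(k!) - k\ln V_1(K) = k\ln k - k(1+\ln V_1(K)) + o(k)$ by Stirling. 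Dividing, $\frac{k\ln k}{\ln(1/V_k(K))}\le \frac{k\ln k}{k\ln k - O(k)}\to 1$, giving $\rho(K)\le 1$.

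Next I would bound the type in the case $\rho(K)=1$. Using \eqref{eq:type_formula} with $\rho=1$, the type is $\sigma = \frac{1}{e}\limsup_{k\to\infty} k\, V_k(K)^{1/k}$ (here $n^{1/\rho}=k$ and $e\rho=e$). Again invoking \eqref{eq:Chevet} in the sharp Stirling form $V_k(K)^{1/k}\le V_1(K)/(k!)^{1/k}$ and using $(k!)^{1/k}\sim k/e$, I get $k\,V_k(K)^{1/k}\le V_1(K)\,k/(k!)^{1/k}\to e\,V_1(K)$, whence $\sigma(K)\le \frac{1}{e}\cdot e\,V_1(K)=V_1(K)$. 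This matches the claimed bound and is clean modulo the Stirling asymptotics, which I would state rather than grind through.

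The remaining task, realizing every $\rho\in[0,1]$, is where the real work lies, and it is the step I expect to be the main obstacle. The strategy is to design GB-sets whose intrinsic volume sequences have prescribed decay. For the two extremes: a single point (or any bounded set with $V_1=0$, i.e.\ a point) gives $f_K\equiv 1$ of order $0$, while any genuine finite-dimensional body already gives a polynomial, also order $0$---so to reach intermediate and extremal orders I must use genuinely infinite-dimensional sets and control $V_k(K)$ asymptotically. The natural candidates are \emph{rectangular parallelepipeds} (coordinate boxes) $K=\prod_{j\ge1}[0,\ell_j]$ with $\sum \ell_j^2<\infty$ to ensure compactness and $\sum \ell_j<\infty$ to ensure $V_1<\infty$ (GB), for which the intrinsic volumes are the elementary symmetric polynomials $V_k(K)=e_k(\ell_1,\ell_2,\dots)$. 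By choosing the edge-lengths $\ell_j$ to decay at a tuned polynomial rate $\ell_j\asymp j^{-\beta}$, one controls the asymptotics of $e_k$ and hence, via \eqref{eq:order_formula}, the order; tuning $\beta$ should sweep out a range of orders, and taking $\ell_j$ summable but only barely (e.g.\ $\ell_j\asymp 1/(j\log^2 j)$) should push the order to $1$, while rapid geometric decay $\ell_j\asymp q^j$ should yield order $0$. The delicate part is obtaining sharp enough asymptotics for $e_k(\ell_1,\ell_2,\dots)$ to pin down $\lim_{k}\frac{k\ln k}{\ln(1/e_k)}$ exactly for each target $\rho$; I would either estimate $e_k$ via the generating identity $\sum_k e_k z^k=\prod_j(1+\ell_j z)$ and saddle-point/logarithmic-integral asymptotics of $\sum_j\log(1+\ell_j z)$, or, more robustly, produce explicit one-parameter families whose order I can compute in closed form and then appeal to continuity/monotonicity of $\rho$ in the parameter to fill the interval $[0,1]$ by an intermediate-value argument.
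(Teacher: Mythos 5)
Your proposal follows essentially the same route as the paper: both deductions of $\rho(K)\le 1$ and $\sigma(K)\le V_1(K)$ rest on Chevet's bound \eqref{eq:Chevet}, and the realizability is obtained from the infinite boxes $\prod_j[0,\ell_j]$. The only cosmetic difference in the first two parts is that you go through the coefficient formulas \eqref{eq:order_formula} and \eqref{eq:type_formula} plus Stirling, whereas the paper simply observes $|f_K(z)|\le \sum_k V_1(K)^k|z|^k/k! = e^{V_1(K)|z|}$ and reads off order and type from the definitions; both are correct, the paper's being a line shorter.

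The one place where you stop short is the computation of the order of the box examples, which you flag as ``the delicate part'' and for which you propose either saddle-point asymptotics of $e_k(\ell_1,\ell_2,\dots)$ or an intermediate-value argument in the parameter. Neither is needed, and the second is shaky as stated (you would have to prove some form of continuity of $\rho$ in $\beta$, which you do not have). The clean finish, and the one the paper uses, is that $\sum_k e_k(\ell)z^k=\prod_j(1+\ell_j z)$ exhibits $f_K$ as a canonical product of genus $0$ with zeros $z_j=-1/\ell_j$, so the Borel--Levin theorem \eqref{eq:orderlimsup} gives directly
\[
\rho(K)=\limsup_{j\to\infty}\frac{\ln j}{\ln(1/\ell_j)},
\]
with no asymptotics of $e_k$ required. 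Taking $\ell_j=j^{-\alpha}$ ($\alpha>1$) yields every $\rho=1/\alpha\in(0,1)$, while $\ell_j=e^{-j}$ and $\ell_j=1/(j\ln^2 j)$ give the endpoints $0$ and $1$. With that substitution your argument is complete and matches the paper's.
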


From Theorem~\ref{thm:order-type} and the Hadamard factorization in Section~\ref{sec:entire-func}, we immediately get:

\begin{corollary}\label{cor:Hadamard}
For any convex GB-body $K\subset H$, the Steiner entire function $f_K(z)$ can be represented in the form 
\begin{equation}\label{eq:fK-factor}
f_K(z) = e^{cz} \prod_{j=1}^\infty \Big(1 - \frac{z}{z_j}\Big)\exp\!\Big(\frac{z}{z_j}\Big),
\end{equation}
where $c\in \mathbb{C}$ and $\{z_j\}_{j\ge1}$ are the nonzero zeros of $f_K$. If $\sum_{j=1}^\infty \frac{1}{|z_j|}$ converges, then \eqref{eq:fK-factor} simplifies to 
\begin{equation}\label{eq:fK-simple-prod}
f_K(z) = e^{cz} \prod_{j=1}^\infty \Big(1 - \frac{z}{z_j}\Big).
\end{equation}
\end{corollary}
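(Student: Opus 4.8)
The plan is to apply Hadamard's factorization theorem \eqref{eq:Hadamard} directly, using the order bound from Theorem~\ref{thm:order-type} to pin down the genus of the product and the degree of the exponential polynomial. Since $f_K$ is entire of order $\rho(K)\le 1$, the theorem applies and yields a representation $f_K(z)=z^m e^{P(z)} g(z)$, where $\deg P\le \rho(K)\le 1$, the $\{z_j\}$ are the nonzero zeros of $f_K$, and $g$ is the associated canonical product.

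First I would dispose of the prefactor $z^m$ and the exponential polynomial. Since $V_0(K)=1$, we have $f_K(0)=1\neq 0$, so the origin is not a zero and $m=0$. Evaluating the factorization at $z=0$ forces $e^{P(0)}=1$, so (fixing the branch) the constant term of $P$ vanishes, and since $\deg P\le 1$ we conclude $P(z)=cz$ for some $c\in\mathbb{C}$.

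Next I would fix the genus of the canonical product. By the standard relation between the convergence exponent of the zeros and the order of an entire function (see the discussion around \eqref{eq:orderlimsup}), the convergence exponent $\lambda$ of $\{z_j\}$ satisfies $\lambda\le\rho(K)\le 1$. In particular $\sum_j |z_j|^{-2}$ converges, so the smallest integer $p$ with $\sum_j |z_j|^{-(p+1)}<\infty$ obeys $p\le 1$. Taking $p=1$ produces exactly the Weierstrass factors $(1-z/z_j)\exp(z/z_j)$, which gives \eqref{eq:fK-factor}.

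Finally, for the simplified form I would treat the case $\sum_j |z_j|^{-1}<\infty$ separately. Here the bare product $\prod_j (1-z/z_j)$ already converges absolutely and locally uniformly, and $\sum_j z_j^{-1}$ converges absolutely, so $\prod_j \exp(z/z_j)=\exp\!\big(z\sum_j z_j^{-1}\big)$ is entire and can be absorbed into the factor $e^{cz}$ by redefining the constant, yielding \eqref{eq:fK-simple-prod}. No step is a genuine obstacle, as the whole argument is a direct specialization of Hadamard's theorem; the one point requiring care is the bookkeeping of the linear exponential terms when lowering the genus from $p=1$ to $p=0$, to ensure the constant $c$ is correctly adjusted rather than dropped.
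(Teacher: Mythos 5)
Your proof is correct and follows essentially the same route as the paper, namely a direct specialization of Hadamard's factorization theorem using $\rho(K)\le 1$ from Theorem~\ref{thm:order-type} and $f_K(0)=V_0(K)=1$ to eliminate the $z^m$ factor and normalize $P(z)=cz$. The paper's own proof is just a one-line reference to these same ingredients, so your more detailed bookkeeping (in particular the absorption of the linear exponential terms into $c$ when the genus drops to $0$) is simply an expanded version of the intended argument.
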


\begin{proof}
This is a direct consequence of Theorem~\ref{thm:order-type} and the discussion in Section~\ref{sec:entire-func} (Hadamard's theorem). The fact that $f_K(0)=1$ ensures no constant factor in front of \eqref{eq:fK-factor}. We emphasize that $c=0$ whenever $\rho(K)<1$ (since then $f_K$ has minimal type $0$ and genus 0). 
\end{proof}

\subsection{Steiner function and geometric properties}

The next result is a new set of criteria for Gaussian continuity and a formula for oscillation, expressed in terms of the analytic properties of the Steiner function.

\begin{theorem}\label{thm:gc_criterion}
Let $K$ be a GB-set in a separable Hilbert space $H$, and let $f_K(z)$ be its Steiner entire function with order $\rho(K)$ and type $\sigma(K)$.
\begin{enumerate}
    \item $K$ is a GC-set if and only if either $\rho(K) < 1$, or $\rho(K)=1$ and $\sigma(K)=0$.
    \item If $\rho(K)=1$, then the type of $f_K(z)$ is equal to the oscillation of $K$:
    $$
    \sigma(K) = \osc(K).
    $$
\end{enumerate}
\end{theorem}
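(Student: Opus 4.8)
The plan is to reduce both statements to a single asymptotic computation: I will show that the quantity $\lim_{n\to\infty} n\,V_n(K)^{1/n}$, which governs both the order of $f_K$ and its type at order $1$, equals $e\,\osc(K)$. Since $\osc(K)=\lim_{k\to\infty}m_k(K)$ by Vitale's theorem, the whole argument hinges on translating the limiting ratio of consecutive intrinsic volumes into growth data of the power series $\sum_k V_k(K)z^k$. The finite-dimensional case is trivial: there $f_K$ is a polynomial, so $\rho(K)=0$, and such $K$ are automatically GC with $\osc(K)=0$, making both assertions vacuous. I therefore assume $K$ is infinite-dimensional, so that $V_k(K)>0$ for every $k$ and all the logarithms and quotients below are well defined.

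First I would record the telescoping identity. Since $V_0(K)=1$ and $V_{k+1}(K)/V_k(K)=m_k(K)/(k+1)$ by the definition \eqref{eq:mk_def} of $m_k$, one obtains
\[
V_n(K)=\prod_{k=0}^{n-1}\frac{V_{k+1}(K)}{V_k(K)}=\frac{1}{n!}\prod_{k=0}^{n-1}m_k(K).
\]
The core step is then the evaluation of $\lim_{n\to\infty} n\,V_n(K)^{1/n}$. Writing $L:=\lim_{k\to\infty}m_k(K)=\osc(K)$ (the limit exists because $\{m_k(K)\}$ is non-increasing by \eqref{eq:ultra-log-concave}), I would combine two ingredients: Stirling's formula in the form $n/(n!)^{1/n}\to e$, and the fact that the geometric mean of a convergent nonnegative sequence tends to its limit, i.e. $\big(\prod_{k=0}^{n-1}m_k(K)\big)^{1/n}\to L$. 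The latter follows by taking logarithms and applying the Cesàro theorem to $\ln m_k(K)$, which remains valid when $L=0$ (the averages then diverge to $-\infty$ and the geometric mean tends to $0$). Multiplying, I obtain
\[
\lim_{n\to\infty} n\,V_n(K)^{1/n}
= \lim_{n\to\infty}\frac{n}{(n!)^{1/n}}\cdot \lim_{n\to\infty}\Big(\prod_{k=0}^{n-1}m_k(K)\Big)^{1/n}
= eL.
\]

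With this limit in hand, part (2) is immediate: assuming $\rho(K)=1$, I apply the type formula \eqref{eq:type_formula} with $\rho=1$ and $a_n=V_n(K)$, which reads $\sigma(K)\,e=\lim_{n\to\infty} n\,V_n(K)^{1/n}=eL$, whence $\sigma(K)=L=\osc(K)$. For part (1), I argue by cases. If $L>0$, then feeding the telescoping identity into the order formula \eqref{eq:order_formula} gives $\ln(1/V_n(K))=\ln n!-\sum_{k<n}\ln m_k(K)=n\ln n\,(1+o(1))$, so $\rho(K)=1$, and part (2) then yields $\sigma(K)=L>0$. If $L=0$, then $\rho(K)\le 1$ by Theorem~\ref{thm:order-type}, and in the boundary case $\rho(K)=1$ part (2) forces $\sigma(K)=0$. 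Thus $\osc(K)>0$ is equivalent to ``$\rho(K)=1$ and $\sigma(K)>0$''; negating this equivalence (and using $\rho(K)\le 1$, so $\rho(K)\ne 1\Leftrightarrow\rho(K)<1$) gives exactly the stated criterion, since $\osc(K)=0$ characterizes GC-sets by Vitale's theorem~\eqref{thm:Vitale}.

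I expect the main obstacle to be the clean justification of the key limit above — in particular, guaranteeing that $\lim_{n\to\infty} n\,V_n(K)^{1/n}$ is a genuine limit (not merely a $\limsup$) and handling the degenerate regime $L=0$, where the Cesàro argument must be run with averages diverging to $-\infty$. The existence of the honest limit is exactly what makes the limit form of \eqref{eq:type_formula} applicable, so the monotonicity of $\{m_k(K)\}$ coming from ultra-log-concavity \eqref{eq:ultra-log-concave} is doing essential work here. The remaining manipulations with \eqref{eq:order_formula} and \eqref{eq:type_formula} are routine once the asymptotics of $\prod_{k<n}m_k(K)$ are pinned down.
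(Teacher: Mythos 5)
Your proposal is correct and follows essentially the same route as the paper: the telescoping identity $V_n=\frac{1}{n!}\prod_{k<n}m_k$, Stirling's formula, and the Ces\`aro argument for the geometric mean are exactly the paper's computation of $\sigma(K)=\osc(K)$, and part (1) is then deduced from Vitale's criterion~\eqref{thm:Vitale}. The only cosmetic difference is that the paper handles the case $\rho(K)<1$ via Lemma~\ref{lem:order-vk} (which gives $\limsup_k \ln m_k/\ln k=\rho-1<0$, hence $m_k\to0$), whereas you run the equivalent case analysis directly through the order formula~\eqref{eq:order_formula}.
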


These results provide an analytic criterion for Gaussian continuity: one can determine whether $K$ is GC by examining the growth rate of $f_K$ in the complex plane. 

Our next theorem shows that the decay rate of $m_k$ is directly related to the order of $f_K$. In particular, for the Wiener spiral examples with $\rho = 2/3$, we recover $m_k = O(k^{-1/2})$.

\begin{theorem} \label{thm:mk_asymptotics}
Let $K$ be a GB-set with Steiner function of order $\rho(K)\in[0,1]$. Then
$$\limsup_{k \to \infty} \frac{\ln m_k(K)}{\ln k} = 1-\frac{1}{\rho(K)} .$$
By convention, if $\rho(K) = 0$, the right-hand side is interpreted as $-\infty$.
\end{theorem}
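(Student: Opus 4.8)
The plan is to reduce everything to the standard coefficient formula for the order of an entire function together with one elementary averaging estimate. Write $a_n=V_n(K)$; since $V_0(K)=1$ and $\frac{V_{n+1}}{V_n}=\frac{m_n(K)}{n+1}$ by \eqref{eq:mk_def}, the coefficients satisfy $V_n=\frac{1}{n!}\prod_{j=0}^{n-1}m_j(K)$, and the ratios satisfy $\lambda_n:=V_n/V_{n+1}=\frac{n+1}{m_n(K)}$. The ultra-log-concavity \eqref{eq:ultra-log-concave} gives $V_n^2\ge \frac{n+1}{n}V_{n+1}V_{n-1}\ge V_{n+1}V_{n-1}$, so $\{V_n\}$ is log-concave and hence $\lambda_n$ is non-decreasing; moreover $\lambda_n\to\infty$, because $\frac{V_{n+1}}{V_n}=\frac{m_n(K)}{n+1}\to 0$ (the sequence $m_n(K)$ being non-increasing, hence bounded). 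I will use the Cauchy--Hadamard-type formula $\rho(K)=\limsup_{n\to\infty}\frac{n\ln n}{\ln(1/V_n)}$ and observe that $\ln(1/V_n)=\sum_{j=0}^{n-1}\ln\lambda_j$.

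The heart of the argument is the following elementary identity for a non-decreasing sequence $c_j:=\ln\lambda_j$ tending to $+\infty$:
\[
\limsup_{n\to\infty}\frac{n\ln n}{\sum_{j=0}^{n-1}c_j}=\limsup_{n\to\infty}\frac{\ln n}{c_n}=:\gamma .
\]
The inequality ``$\ge$'' is immediate from $\sum_{j=0}^{n-1}c_j\le n\,c_{n}$ (monotonicity), giving $\frac{n\ln n}{\sum_{j<n}c_j}\ge\frac{\ln n}{c_n}$; together with $\rho(K)\le 1$ from Theorem~\ref{thm:order-type} this shows $\gamma\le\rho(K)\le 1$, so $\gamma$ is finite. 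For ``$\le$'', fix $\eps>0$; by definition of $\gamma$ there is $n_0$ with $c_n\ge\frac{\ln n}{\gamma+\eps}$ for $n\ge n_0$, so summing and using $\sum_{j\le n}\ln j=n\ln n-n+O(\ln n)$ (Stirling, the finitely many early terms contributing only a bounded constant) yields $\sum_{j=0}^{n-1}c_j\ge\frac{n\ln n}{\gamma+\eps}(1+o(1))$, whence $\frac{n\ln n}{\sum_{j<n}c_j}\le(\gamma+\eps)(1+o(1))$; letting $\eps\to0$ gives the claim. Consequently $\rho(K)=\limsup_{n\to\infty}\frac{\ln n}{\ln\lambda_n}$.

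It remains to translate this into $m_k$. Using $\ln\lambda_n=\ln(n+1)-\ln m_n(K)$ and $\ln(n+1)=\ln n+o(1)$,
\[
\frac{\ln n}{\ln\lambda_n}=\frac{1}{\,1-\dfrac{\ln m_n(K)}{\ln n}+o(1)\,}.
\]
Because $m_n(K)$ is bounded, the quantities $x_n:=\frac{\ln m_n(K)}{\ln n}$ are bounded above and stay away from $1$ (indeed $x_n\to 0$ when $\osc(K)>0$, and $x_n\le 0$ eventually when $\osc(K)=0$). Since $t\mapsto\frac{1}{1-t}$ is increasing and continuous on $(-\infty,1)$, the $\limsup$ may be passed through it, giving $\rho(K)=\frac{1}{1-\limsup_n x_n}=\bigl(1-\beta\bigr)^{-1}$, where $\beta:=\limsup_{k\to\infty}\frac{\ln m_k(K)}{\ln k}$. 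Solving yields $\beta=1-\frac{1}{\rho(K)}$, which is the assertion. The degenerate case $\rho(K)=0$ (in particular any finite-dimensional $K$, where $f_K$ is a polynomial and $m_k(K)=0$ for large $k$) corresponds to $\gamma=0$, forcing $\beta=-\infty$, consistent with the stated convention.

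The main obstacle is the ``$\le$'' direction of the averaging identity: from a mere $\limsup$ bound on $c_n$ one must still control the partial sums $\sum_{j<n}c_j$ from below. This is exactly where log-concavity of $\{V_n\}$ (equivalently, monotonicity of $\lambda_n$, hence of $c_n$) is indispensable—it allows the pointwise bound $c_n\ge\ln n/(\gamma+\eps)$ to be summed over the full range without losing a constant factor, which a naive dyadic splitting would forfeit. A secondary point requiring care is the passage of $\limsup$ through $t\mapsto(1-t)^{-1}$, legitimate only because the $x_n$ remain bounded away from $1$, a fact guaranteed by the boundedness of $m_n(K)$.
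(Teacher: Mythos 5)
Your proof is correct and follows essentially the same route as the paper: both arguments reduce the theorem to the identity $\rho(K)=\limsup_{k\to\infty}\frac{\ln k}{\ln\left(V_k(K)/V_{k+1}(K)\right)}$ via the Taylor-coefficient formula for the order plus log-concavity of $\{V_k(K)\}$ (your bound $\sum_{j<n}c_j\le n c_n$ is exactly the paper's inequality $-\ln\frac{V_{k+1}}{V_k}\ge\frac{-\ln V_k}{k}$), and then rearrange the $\limsup$; the only difference is that the paper packages this as Lemma~\ref{lem:order-vk} and handles the remaining direction by Stolz--Ces\`aro, whereas you sum the pointwise bound $c_n\ge\ln n/(\gamma+\eps)$ directly. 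One small slip in your closing commentary: monotonicity of $c_n$ is what drives the ``$\ge$'' direction, while your ``$\le$'' direction uses only the definition of $\limsup$ --- but this does not affect the validity of the argument.
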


Using this theorem, we will disprove the \emph{Gao--Vitale conjecture}.

\subsection{Gao--Vitale conjecture}
Gao and Vitale~\cite{GV01} initiated the study of the asymptotic behavior of $V_k(K)$ and $m_k(K)$ for GB-sets $K$ in a Hilbert space. They computed these quantities explicitly for the closed convex hull of the Wiener spiral (see Subsection~4.1), and observed a specific decay rate $m_k = O(k^{-1/2})$ in that case. Based on this and related evidence, Gao and Vitale formulated the following conjecture:

\begin{conjecture}[Gao--Vitale (2001)~\cite{GV01}]
For every convex GB-compact set $K$ in a separable Hilbert space, either
\[
\lim_{k \to \infty} m_k(K) > 0,
\]
or
\[
m_k(K) = O(k^{-1/2}) \quad (k \to \infty).
\]
In other words, the sequence $m_k(K)$ cannot decay to zero at a rate slower than $k^{-1/2}$.
\end{conjecture}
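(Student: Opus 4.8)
The plan is to \emph{disprove} this conjecture by producing a single Gaussian continuous GB-set whose ratios $m_k$ tend to $0$ yet decay strictly slower than $k^{-1/2}$, thereby violating both alternatives of the stated dichotomy. The engine would be Theorem~\ref{thm:mk_asymptotics}: for a GB-set $K$ of order $\rho(K)=\rho$,
\[
\limsup_{k\to\infty}\frac{\ln m_k(K)}{\ln k}=1-\frac1\rho ,
\]
so $m_k(K)=O(k^{-1/2})$ would force $1-1/\rho\le-1/2$, i.e.\ $\rho\le 2/3$. Consequently any GB-set with order $\rho(K)\in(2/3,1)$ already refutes the conjecture: by Theorem~\ref{thm:gc_criterion}(1) it is Gaussian continuous, hence $m_k\to0$, while $\limsup_k \ln m_k/\ln k = 1-1/\rho(K)>-1/2$ rules out $m_k=O(k^{-1/2})$. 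So the whole task reduces to exhibiting a GB-set whose Steiner function has order strictly between $2/3$ and $1$.

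For the counterexample I would take, for a fixed $\alpha\in(1,\tfrac32)$, the infinite-dimensional box $K=\prod_{j\ge1}[0,j^{-\alpha}]\subset\ell^2$. Since $\sum_j j^{-2\alpha}<\infty$, $K$ is a compact convex subset of $\ell^2$, and since $V_1(K)=\sum_j j^{-\alpha}<\infty$ it is GB. The intrinsic volumes of a finite box are the elementary symmetric polynomials of its edge lengths, and passing to the monotone limit of the coordinate truncations $K_d=\prod_{j\le d}[0,j^{-\alpha}]\uparrow K$ would give $V_k(K)=e_k(1^{-\alpha},2^{-\alpha},\dots)$, so that
\[
f_K(z)=\prod_{j=1}^\infty\Bigl(1+\frac{z}{j^{\alpha}}\Bigr).
\]
Thus $f_K$ is a genus-zero canonical product (since $\sum_j|z_j|^{-1}=\sum_j j^{-\alpha}<\infty$) whose zeros are $z_j=-j^{\alpha}$. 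By the Borel--Levin identity~\eqref{eq:orderlimsup}, its order equals the convergence exponent of $\{z_j\}$, namely $\rho(K)=\limsup_j \ln j/\ln|z_j|=1/\alpha\in(2/3,1)$, exactly as required.

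Combining these, Theorem~\ref{thm:gc_criterion}(1) shows $K$ is GC, so $m_k(K)\to0$, while Theorem~\ref{thm:mk_asymptotics} gives $\limsup_k \ln m_k(K)/\ln k=1-\alpha\in(-1/2,0)$; taking e.g.\ $\alpha=5/4$ yields $\rho=4/5$ and exponent $-1/4$, so along a subsequence $m_k/k^{-1/2}\to\infty$, contradicting the conjecture. The step I expect to require the most care is the identity $f_K(z)=\prod_j(1+z/j^{\alpha})$: one must justify that the infinite-dimensional intrinsic volumes coincide with the limits of the truncated boxes, i.e.\ $V_k(K)=\lim_{d\to\infty}e_k(1^{-\alpha},\dots,d^{-\alpha})$. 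This should follow from the monotone approximation definition of $V_k$ together with the monotonicity of intrinsic volumes under the inclusions $K_d\subset K$, and it is essentially the only place where genuinely infinite-dimensional subtleties enter. Everything else is a clean application of the order machinery; in particular, computing $\rho(K)$ through the zeros rather than through the coefficients $e_k$ avoids any asymptotic estimation of elementary symmetric polynomials.
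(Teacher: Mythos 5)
Your disproof is correct and follows essentially the same route as the paper: the authors also refute the conjecture by combining Theorem~\ref{thm:mk_asymptotics} with the existence of GB-sets of order $\rho\in(2/3,1)$, realized exactly by the power-law boxes $\prod_j[0,j^{-\alpha}]$ of Subsection~\ref{subsec:parall}. Your added care about justifying $V_k(K)=\lim_d e_k(1^{-\alpha},\dots,d^{-\alpha})$ matches the approximation argument the paper invokes for those parallelepipeds.
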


However, Theorem~\ref{thm:mk_asymptotics} implies that for any $\varepsilon > 0$, the estimate
\[
m_k(K) = O\left(k^{1 - 1/{\rho(K)} - \varepsilon}\right)
\]
fails whenever $\rho(K) > \frac{2}{3}$. By Theorem~\ref{thm:order-type}, such GB-sets $K$ do exist. Therefore, any GB-set with $\rho(K) > \frac{2}{3}$ provides a counterexample to the Gao--Vitale conjecture.

\subsection{Conjecture on the Blaschke diagram in Hilbert space}

To conclude this section, we propose a conjecture characterizing the possible sequences of intrinsic volumes of infinite-dimensional convex compacts. In 1916, Blaschke posed a problem that, in terms of intrinsic volumes, can be reformulated as follows: for convex compacts in $\mathbb{R}^3$, describe completely the set of triples $(V_1(K),\,V_2(K),\,V_3(K))$ arising from convex compact sets $K$. 

A well-known system of inequalities relating $V_1(K)$, $V_2(K)$, and $V_3(K)$ is derived from the  Aleksandrov--Fenchel inequalities (see, e.g.,~\cite[Section~4.2]{BZ80} or~\cite[Section~14.3]{Sch14}). However, this system does not yield an exhaustive set of necessary and sufficient conditions, and Blaschke’s problem remains open for $d \geq 3$.

We suspect that the infinite-dimensional analogue admits a simpler and more elegant characterization. This belief is supported by the observation that, in contrast to the finite-dimensional setting, all known asymptotic (dimension-free) inequalities for intrinsic volumes follow from a single inequality, namely~\eqref{eq:ultra-log-concave}.

\begin{conjecture}[Characterization of $V_k$-sequences in infinite dimensions]\label{conj:sequence}
A sequence of positive numbers $a_0 = 1, a_1, a_2, \dots$ is the sequence of intrinsic volumes $\{V_k(K)\}_{k \ge 0}$ for some infinite-dimensional  GB-set if and only if it satisfies the inequality
\[
a_k^2 \,\ge\, \frac{k+1}{k}\, a_{k-1} a_{k+1}, \qquad k = 1, 2, \dots.
\]
In other words, the volume sequence must be ultra-log-concave (as in ~\eqref{eq:ultra-log-concave}), and we conjecture that this condition is also sufficient in infinite dimensions.
\end{conjecture}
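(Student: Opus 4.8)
The necessity direction is already contained in \eqref{eq:ultra-log-concave}: every GB-set has ultra-log-concave intrinsic volumes, so there is nothing to prove. The entire difficulty is the sufficiency (the ``if'' direction), namely realizing a prescribed ultra-log-concave sequence as the intrinsic volumes of some infinite-dimensional GB-set. The plan is constructive. A first convenient step is to substitute $b_k := k!\,a_k$, under which the ultra-log-concavity $a_k^2 \ge \frac{k+1}{k}a_{k-1}a_{k+1}$ becomes ordinary log-concavity $b_k^2 \ge b_{k-1}b_{k+1}$; this links the problem to the well-developed theory of log-concave (P\'olya-frequency) sequences. Since the bound \eqref{eq:Chevet} follows from ultra-log-concavity alone, the generating function $f(z)=\sum_k a_k z^k$ is entire of order at most $1$, so it admits the Hadamard factorization of Corollary~\ref{cor:Hadamard}; the task is to recognize this $f$ as a genuine Steiner function $f_K$.

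The engine of the construction is multiplicativity. If $K_1,K_2$ lie in orthogonal subspaces, then $V_k(K_1\oplus K_2)=\sum_{i+j=k}V_i(K_1)V_j(K_2)$, so Steiner functions multiply, $f_{K_1\oplus K_2}=f_{K_1}f_{K_2}$, and $K_1\oplus K_2$ is again convex, compact and GB. Hence the realizable Steiner functions form a multiplicative semigroup, closed under coefficientwise limits, and I would build $f$ from a dictionary of elementary realizable factors: segments give the linear factors $1+\ell z$ (real zeros); planar convex bodies give the quadratic factors $1+bz+cz^2$, where the isoperimetric inequality forces $b^2\ge\pi c$ (the disk sitting at equality); $d$-dimensional balls give degree-$d$ polynomial factors whose zeros are the ``roundest'' available. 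The strategy is then to group the Hadamard zeros of $f$ into real zeros and conjugate pairs, realize the real zeros by segments and the conjugate pairs by planar or higher-dimensional bodies, and treat the exponential factor $e^{cz}$ (the equality case of \eqref{eq:Chevet}) as a coefficientwise limit of products of many short segments.

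The main obstacle is precisely the mismatch between algebraic and geometric realizability of the quadratic (and higher) factors. A conjugate pair of zeros produces a quadratic $1+bz+cz^2$ that is ultra-log-concave as soon as $b^2\ge 2c$, yet a single planar body requires the strictly stronger $b^2\ge\pi c$; in the gap $2\le b^2/c<\pi$ the factor is ``rounder than a disk'' and cannot be realized by any two-dimensional set. One therefore cannot realize the Hadamard factors independently: a too-round pair must be absorbed into a higher-dimensional building block together with other factors, and showing that such a joint realization is always possible is exactly the infinite-dimensional Blaschke problem. The crux of the conjecture, and of any proof, is that the unlimited supply of orthogonal dimensions dissolves the finite-dimensional obstructions that keep Blaschke's problem open for $d\ge3$: the extra coordinates provide enough room to spread rounded mass across many factors while leaving the coefficient sequence unchanged. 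Making this quantitative---controlling how roundness can be distributed over a product without disturbing the prescribed $a_k$, and handling the boundary equality cases, which at best lie in the closure of the realizable set---is where I expect the real work to lie.

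Finally, I would keep a probabilistic construction in reserve as an alternative to the geometric product argument. Via Tsirelson's formula \eqref{eq:Tsirelson}, the intrinsic volumes are determined by the supremum of the isonormal process on $K$, and the Gao--Vitale computation for the Wiener spiral shows that natural Gaussian models produce explicit, tunable intrinsic-volume sequences. Recasting the target $(a_k)$ as a covariance-design (moment) problem for a suitable centered Gaussian process, and taking $K$ to be the closed convex hull of its path, may realize precisely those rounded sequences that defeat the elementary product dictionary; reconciling the two viewpoints would, I expect, be the route to a complete proof.
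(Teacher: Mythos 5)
The statement you are addressing is a \emph{conjecture}: the paper offers no proof of it, and the sufficiency direction is precisely the open problem being posed. Your necessity half is correct and is all the paper claims --- it is exactly \eqref{eq:ultra-log-concave}, due to Chevet and McMullen via Alexandrov--Fenchel --- but your sufficiency half is a strategy sketch, not a proof, and you say as much yourself (``where I expect the real work to lie''). Concretely, two steps in the sketch are genuine gaps rather than routine details. First, the claim that the realizable Steiner functions are ``closed under coefficientwise limits'' is unjustified and is already needed for your most basic non-polynomial building block: to get the factor $e^{cz}$, i.e.\ the equality sequence $a_k=c^k/k!$ in \eqref{eq:Chevet}, you approximate by boxes $\prod_{j=1}^n[0,c/n]$, but these have diameter $c/\sqrt n\to 0$ and collapse to a point, while their intrinsic volumes converge to $c^k/k!\neq V_k(\{pt\})$. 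So intrinsic volumes are not continuous along your approximating sequence, and it is not clear that the limiting coefficient sequence is realizable at all --- that instance is itself a nontrivial case of the conjecture.

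Second, the central obstruction you correctly identify --- a conjugate pair of Hadamard zeros giving a quadratic $1+bz+cz^2$ with $2\le b^2/c<\pi$, which is ultra-log-concave but violates the planar isoperimetric constraint $V_1^2\ge \pi V_2$ --- is named but not resolved. Your proposed remedy (absorb such factors into higher-dimensional blocks together with other factors, relying on ``the unlimited supply of orthogonal dimensions'') is exactly the infinite-dimensional Blaschke problem restated, not an argument; no mechanism is given for redistributing ``roundness'' across a product while keeping every coefficient $a_k$ fixed, and the finite-dimensional Blaschke problem that would control each block is open for $d\ge 3$. The probabilistic fallback via \eqref{eq:Tsirelson} is likewise only a pointer. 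In short: necessity is fine (and known), sufficiency remains entirely open, and your proposal should be read as a research program consistent with the paper's framing of the statement as Conjecture~\ref{conj:sequence}, not as a proof.
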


\section{Examples}\label{sec:examples}
We compute Steiner entire functions in three cases that demonstrate different growth regimes $\rho$. The first two deal with convex hulls of certain infinite curves in $\R^2$ (related to Brownian motion), yielding intermediate order $0<\rho<1$. The third is an infinite box in $H=\ell^2$, which attains the boundary case $\rho=1$ but allows explicit formulae. 

\subsection{Closed convex hull of the Wiener spiral}
Let 
\[ S := \{ \mathbf{1}_{[0,t]}(\cdot) : t\in[0,1]\} \subset L^2[0,1], \] 
where $\mathbf{1}_{[0,t]}(x)$ is the indicator function of the interval $[0,t]$. This curve in $L^2[0,1]$ is often referred to as the \emph{Wiener spiral}, as the isonormal process restricted to $S$ has the same distribution as Brownian motion. This follows immediately from the observation that the inner product of points on the curve coincides with the covariance function of the Wiener process.

Consider $K = \overline{\conv}(S)$, the closed convex hull of this spiral in $L^2$. It is known that $K$ is a compact subset of $L^2$ (in fact, a \emph{GC}-compact, as Brownian motion is continuous). Gao and Vitale~\cite{GV01} computed the intrinsic volumes of $K$ explicitly, showing that 
\[
V_k(K) = \frac{\kappa_k}{k!},
\]
which implies
\[
m_k(K) = \frac{(k+1)V_{k+1}(K)}{V_k(K)} = \sqrt\pi\, k^{-1/2}(1+o(1)) \quad \text{as } k\to\infty.
\] 
It is straightforward to check that the Steiner entire function of the closed convex hull of the Wiener spiral has the following representation:
\[
f_K(z) = \sum_{k = 0}^{\infty} \frac{\pi^{k/2}}{\Gamma\left( \frac{k}{2} + 1 \right) k!} z^k 
= {}_0F_2\left(\tfrac{1}{2}, 1; \tfrac{\pi z^2}{4} \right) 
+ 2z \cdot {}_0F_2\left(\tfrac{3}{2}, \tfrac{3}{2}; \tfrac{\pi z^2}{4} \right),
\]
where 
\[
{}_0F_2\left(\alpha, \beta; z \right) = \sum_{k = 0}^{\infty} \frac{z^k}{(\alpha)_k (\beta)_k k!}
\]
is the generalized hypergeometric function and 
\[
(\alpha)_k = \frac{\Gamma(\alpha + k)}{\Gamma(\alpha)} = \alpha (\alpha + 1) \cdots (\alpha + k - 1)
\]
is the Pochhammer symbol. For a detailed treatment of hypergeometric functions, see~\cite{Bat53}.

Applying Stirling's approximation with~\eqref{eq:order_formula} and~\eqref{eq:type_formula} immediately gives that $f_K(z)$ has order 
\[
\rho(K) = \frac{2}{3}
\quad \text{and type} \quad 
\sigma(K) = \frac{3}{2} (2\pi)^{1/3}.
\]

\subsection{Closed convex hull of the Wiener spiral bridge}
Now let
\[
S^\circ = \{1_{[0,t]} - t : t \in [0,1]\} \subset L^2[0,1]
\]
be the so-called \emph{Wiener spiral bridge}—another curve in $L^2[0,1]$ closely related to the Wiener spiral. The isonormal process $\xi$, when restricted to $S^\circ$, has the same distribution as the Brownian bridge. Let $K^\circ = \overline{\conv}(S^\circ)$. Gao showed that
$$
V_k(K^\circ)=\frac{\kappa_{k+1}}{2k!},
$$
which, as in the case of the Wiener spiral, implies
\[ m_k(K^\circ) = \frac{(k+1)V_{k+1}(K^\circ)}{V_k(K^\circ)} = \sqrt\pi k^{-1/2}(1+o(1)) \quad \text{as } k\to\infty. \] 
Again, similarly to the Wiener spiral, we have
\begin{align*}
f_{K^\circ}(z) = {}_0F_2 \left(\frac{1}{2}, \frac{3}{2}; \frac{\pi z^2}{4}\right) + \frac{1}{2} \pi z \cdot {}_0F_2 \left( \frac{3}{2}, 2; \frac{\pi z^2}{4}\right), 
\end{align*}
and the function $f_{K^\circ}(z)$ has the same order and type: $$\rho(K^\circ) = \frac{2}{3},\quad \sigma(K^\circ) = \frac{3}{2} (2\pi)^{1/3}.$$

\subsection{Infinite-dimensional parallelepipeds} \label{subsec:parall}

Consider 
\[ K = \prod_{j=1}^\infty [0,\ell_j]\subset H=\ell^2, \] 
an infinite box  with side lengths $\ell_1,\ell_2,\ell_3,\dots$ such that $\sum_{j=1}^\infty \ell_j^2 < \infty$. This set $K$ is convex and compact. In addition, $K$ is a GB-set if and only if $\sum_{j=1}^\infty \ell_j < \infty$, by Sudakov’s theorem (since $V_1(K) = \sum_j \ell_j$, as we see now). 

For this infinite parallelepiped, the intrinsic volumes $V_k(K)$ are given by the sums of all $k$-fold products of distinct side lengths. That is,
\[ V_k(K) = \sum_{1\le j_1 < j_2 < \cdots < j_k} \ell_{j_1}\ell_{j_2}\cdots \ell_{j_k}\,. \] 
This can be justified  by an approximation argument taking finite-dimensional projections of $K$. The formula is identical to that for a $d$-dimensional rectangular box when $d=\infty$, as long as the series converge.

Observe that this expression coincides with the expansion of the infinite product
\[
\prod_{j=1}^\infty (1 + \ell_j z) \,=\, 1 + \left( \sum_j \ell_j \right) z + \sum_{i<j} \ell_i \ell_j z^2 + \cdots = \sum_{k=0}^\infty V_k(K)\, z^k,
\]
for those values of $z$ for which the product converges. Since $\sum_j \ell_j < \infty$, the product $\prod_j (1 + \ell_j z)$ converges absolutely for all $z \in \mathbb{C}$, by comparison with $\exp(|z| \sum_j \ell_j)$. This yields the following identity for the Steiner entire function:
\[
f_K(z) = \prod_{j=1}^\infty (1 + \ell_j z).
\]

Applying the general formula for the order~\eqref{eq:orderlimsup}, we obtain:
\[
\rho\bigg(\prod_{j=1}^\infty [0,\ell_j]\bigg) = \limsup_{j \to \infty} \frac{\ln j}{\ln (1/\ell_j)}.
\]

In particular, if $\ell_j$ decays as a power law $\ell_j = j^{-\alpha}$ with $\alpha > 1$ (ensuring convergence of $\sum_j \ell_j$), then
\[
\rho = \frac{1}{\alpha}.
\]
Moreover, the sequences $\ell_j = e^{-j}$ and $\ell_j = \frac{1}{j(\ln j)^2}$ yield orders $0$ and $1$, respectively. Thus, the full range $\rho \in [0,1]$ described in Theorem~\ref{thm:order-type} is attained.

\section{Proofs}

\begin{proof}[Proof of Theorem~\ref{thm:order-type}]

By inequality (3), we have
\[
|f_K(z)| \leq f_K(|z|) = \sum_{k=0}^\infty V_k(K)|z|^k \leq \sum_{k=0}^\infty \frac{V_1(K)^k}{k!}|z|^k = \exp(V_1(K)|z|).
\]
Thus, the order $\rho(K) \leq 1$.

If $\rho(K) = 1$, then from the definition (11) and the inequality above, it follows that the type $\sigma(K) \leq V_1(K)$.

The realizability of all orders $\rho \in [0,1]$ is shown by the parallelepiped examples in Subsection~\ref{subsec:parall}.

\end{proof}

In what follows, we will need the Stolz-Cesàro theorem.

\begin{theorem} \label{thm:stolz_cesaro}
Let $\{a_n\}_{n \ge 1}$ and $\{b_n\}_{n \ge 1}$ be two sequences of real numbers. If $\{b_n\}$ is strictly monotone and unbounded, then
$$
\liminf_{n \to \infty} \frac{a_{n+1}-a_n}{b_{n+1}-b_n} \le \liminf_{n \to \infty} \frac{a_n}{b_n} \le \limsup_{n \to \infty} \frac{a_n}{b_n} \le \limsup_{n \to \infty} \frac{a_{n+1}-a_n}{b_{n+1}-b_n}.
$$
In particular, if $\lim_{n \to \infty} \frac{a_{n+1}-a_n}{b_{n+1}-b_n}$ exists, then $\lim_{n \to \infty} \frac{a_n}{b_n}$ also exists and they are equal.
\end{theorem}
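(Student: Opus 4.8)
The plan is to treat this as the discrete analogue of L'Hôpital's rule. First I would reduce to the case where $\{b_n\}$ is strictly \emph{increasing}, and hence $b_n \to +\infty$: if instead $b_n \to -\infty$, I replace $b_n$ by $-b_n$ and note that each of the four quantities in the chain flips in a compatible way (using $\liminf(-x_n) = -\limsup(x_n)$), so the inequality for the increasing case transfers verbatim. Within the increasing case the central inequality $\liminf a_n/b_n \le \limsup a_n/b_n$ is automatic, and the leftmost inequality follows from the rightmost one applied to the sequence $\{-a_n\}$ (again via $\liminf(-x_n) = -\limsup(x_n)$). Hence the whole theorem reduces to proving the single inequality
\[
\limsup_{n\to\infty}\frac{a_n}{b_n} \;\le\; \limsup_{n\to\infty}\frac{a_{n+1}-a_n}{b_{n+1}-b_n}.
\]

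To prove this, set $L := \limsup_{n\to\infty}(a_{n+1}-a_n)/(b_{n+1}-b_n)$. If $L=+\infty$ there is nothing to show, so assume $L<+\infty$ and fix any real $\ell > L$. By definition of $\limsup$ there is an $N$ with $(a_{n+1}-a_n)/(b_{n+1}-b_n) < \ell$ for all $n \ge N$; since $b_{n+1}-b_n>0$, this rearranges to $a_{n+1}-a_n < \ell(b_{n+1}-b_n)$. Summing these inequalities telescopically from $N$ to $n-1$ gives $a_n - a_N < \ell(b_n - b_N)$, i.e.
\[
a_n < (a_N - \ell b_N) + \ell\, b_n, \qquad n > N.
\]
Because $b_n \to +\infty$, for large $n$ we have $b_n>0$, and dividing by $b_n$ yields $a_n/b_n < (a_N-\ell b_N)/b_n + \ell$. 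Letting $n\to\infty$, the first term vanishes (its numerator is a fixed constant while $b_n\to+\infty$), so $\limsup_n a_n/b_n \le \ell$. Since $\ell>L$ was arbitrary, $\limsup_n a_n/b_n \le L$, which is exactly the desired inequality.

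Finally, the ``in particular'' clause is immediate: if $\lim_n (a_{n+1}-a_n)/(b_{n+1}-b_n)$ exists, then its $\liminf$ and $\limsup$ coincide, so all four terms in the chain are squeezed to that common value, forcing $\lim_n a_n/b_n$ to exist and equal it.

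I do not expect a serious obstacle here, as the argument is elementary. The only points requiring care are bookkeeping ones: verifying that the monotone reduction and the $a_n \mapsto -a_n$ symmetry interact correctly with $\liminf$ and $\limsup$ (so that no inequality is accidentally reversed), and separately handling the degenerate case $L=+\infty$ as well as ensuring, after the reduction, that $b_n$ is eventually positive before dividing. These are routine, so I expect the write-up to be short.
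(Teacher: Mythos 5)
Your proof is correct and complete. The reduction to the strictly increasing case via $b_n \mapsto -b_n$, the further reduction of the leftmost inequality to the rightmost one via $a_n \mapsto -a_n$, and the core telescoping argument (fix $\ell > L$, sum $a_{n+1}-a_n < \ell(b_{n+1}-b_n)$ from $N$ to $n-1$, divide by the eventually positive $b_n$, and let $\ell \downarrow L$) are all handled properly, including the degenerate case $L=+\infty$; the argument also silently covers $L=-\infty$, since then every real $\ell$ is admissible. One point of comparison, however: the paper does not prove this statement at all. It is the classical Stolz--Ces\`aro theorem, quoted as a known auxiliary result for use in the proof of Lemma~\ref{lem:order-vk}, so there is no proof of record to measure yours against. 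Your argument is the standard textbook one and would serve as a correct, self-contained proof if one were required.
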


We begin with a key lemma relating the order of $f_K$ of the intrinsic volumes via $m_k$.

\begin{lemma}
\label{lem:order-vk}
If $K$ is GB, then
\[
\rho(K) = \limsup_{k \to \infty} \frac{\ln k}{-\ln(m_k/k)}.
\]
\end{lemma}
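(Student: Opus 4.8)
The plan is to translate everything into the coefficient language of $f_K$ and then pass between the \emph{coefficient form} and the \emph{ratio form} of the order. Throughout I assume $K$ is infinite-dimensional, so that $V_k := V_k(K)>0$ for every $k$ (if $K$ is finite-dimensional, $f_K$ is a polynomial, $\rho(K)=0$, and both sides vanish). Set $L_k := \ln(1/V_k)$ and $d_k := L_{k+1}-L_k = \ln(V_k/V_{k+1})$; since $V_0=1$ we have $L_0=0$ and $L_k = \sum_{j=0}^{k-1} d_j$. The order formula \eqref{eq:order_formula} (in its general $\limsup$ form) reads $\rho(K) = \limsup_{k\to\infty} \tfrac{k\ln k}{L_k}$. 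A direct computation from \eqref{eq:mk_def} gives
\[
-\ln(m_k/k) \;=\; d_k - \ln(1+1/k),
\]
so once I show $d_k\to\infty$ the correction $\ln(1+1/k)\to 0$ is negligible and $\limsup_k \tfrac{\ln k}{-\ln(m_k/k)} = \limsup_k \tfrac{\ln k}{d_k}$ (multiplying a sequence by a factor tending to $1$ preserves its $\limsup$). Thus the lemma reduces to the purely analytic identity $\limsup_k \tfrac{k\ln k}{L_k} = \limsup_k \tfrac{\ln k}{d_k}$.

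Next I would record two structural facts. First, by ultra-log-concavity \eqref{eq:ultra-log-concave} the sequence $m_k$ is non-increasing; since $\ln m_k = \ln(k+1) - d_k$, this means $d_k$ is \emph{non-decreasing}. Second, because $f_K$ is entire its coefficients satisfy $V_k^{1/k}\to 0$, i.e. $L_k/k\to\infty$; combined with monotonicity (the average $L_k/k = \tfrac1k\sum_{j<k} d_j \le d_{k-1}$) this forces $d_k\to\infty$, which validates the reduction above.

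The identity then splits into two inequalities, and this pairing is the crux. For ``$\le$'' I use the monotonicity of $d_k$: from $L_{k+1} = \sum_{j=0}^{k} d_j \le (k+1)d_k$ I get $d_k \ge L_{k+1}/(k+1)$, whence $\tfrac{\ln k}{d_k} \le \tfrac{(k+1)\ln k}{L_{k+1}}$; taking $\limsup$ and reindexing ($n=k+1$, with $\ln(n-1)/\ln n\to 1$) bounds the right-hand side by $\rho(K)$. For ``$\ge$'' I apply the Stolz--Cesàro theorem (Theorem~\ref{thm:stolz_cesaro}) with $a_k = L_k$ and $b_k = k\ln k$ (strictly increasing and unbounded): since $b_{k+1}-b_k = (k+1)\ln(k+1)-k\ln k = \ln k + 1 + o(1) \sim \ln k$, the lower Stolz inequality yields $\liminf_k \tfrac{d_k}{\ln k} \le \liminf_k \tfrac{L_k}{k\ln k}$, and taking reciprocals gives $\limsup_k \tfrac{\ln k}{d_k} \ge \limsup_k \tfrac{k\ln k}{L_k} = \rho(K)$. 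Combining the two inequalities proves the identity.

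The main obstacle — more a matter of care than of depth — is the bookkeeping with $\limsup$ versus $\liminf$ under the reciprocal map, and ensuring the two inequalities draw on the right ingredient: the ``$\ge$'' direction is the general coefficient-ratio bound for the order and needs no regularity, whereas the ``$\le$'' direction genuinely requires the monotonicity of $d_k$ (equivalently, of $m_k$), which is exactly where ultra-log-concavity enters. I would double-check the asymptotics $b_{k+1}-b_k\sim\ln k$ and the elementary fact that a factor tending to $1$ leaves $\limsup$ and $\liminf$ unchanged, since these quietly underlie both the reduction to $d_k$ and the Stolz step.
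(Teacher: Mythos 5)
Your proof is correct and follows essentially the same route as the paper's: both reduce the claim to the identity $\limsup_k \frac{k\ln k}{-\ln V_k} = \limsup_k \frac{\ln k}{\ln(V_k/V_{k+1})}$ via the coefficient formula for the order, prove one inequality from the monotonicity of the differences of $-\ln V_k$ (log-concavity), and prove the other via Stolz--Ces\`aro. Your write-up is in fact more careful than the paper's at a few points it leaves implicit --- applying Stolz with $b_k = k\ln k$ (obviously strictly increasing) rather than with $b_k=-\ln V_k$, justifying that the correction $\ln(1+1/k)$ is negligible because $d_k\to\infty$, and handling the finite-dimensional case --- but the underlying argument is the same.
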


\begin{proof}

It follows from~\eqref{eq:order_formula} and~\eqref{eq:type_formula} that
\begin{equation}
\rho(K) = \lim_{n \to \infty} \frac{n \ln n}{\ln \left( \frac{1}{|V_n|} \right)}
\label{eq:K_order_formula}
\end{equation}
and 
\begin{equation}
(\sigma(K) e \rho(K))^{1/\rho} = \lim_{n \to \infty} n^{1/\rho} \sqrt[n]{|V_n|}.
\label{eq:K_type_formula}
\end{equation}

Using the log-convexity of $\{-\ln V_k\}$ and the fact that $V_0 = 1$, we have
\[
-\ln\frac{V_{k+1}}{V_k} = -\ln V_{k+1} + \ln V_k \geq \frac{-\ln V_k}{k},
\]
hence
\[
\frac{k \ln k}{-\ln V_k} \geq \frac{\ln k}{-\ln(V_{k+1}/V_k)}.
\]

For the reverse inequality, since $\{-\ln V_k\}$ is convex and unbounded (as $\rho > 0$), it is eventually monotone increasing. Applying the Stolz--Ces\`aro theorem to the sequences $\{k \ln k\}$ and $\{-\ln V_k\}$, we get the desired equality. The third expression follows from $m_k = (k+1)V_{k+1}/V_k$.
\end{proof}

\begin{proof}[Proof of Theorem~\ref{thm:gc_criterion}]
By Lemma~\ref{lem:order-vk}, if $\rho < 1$, then
\[
\limsup_{k \to \infty} \frac{\ln m_k}{\ln k} = \rho - 1 < 0.
\]
This implies $m_k \to 0$, so $K$ is GC by~\eqref{thm:Vitale}.

For order $\rho = 1$, the type is given by
\[
e\sigma = \limsup_{n \to \infty} n|V_n|^{1/n}.
\]
Using $V_n = \frac{1}{n!}\prod_{j=0}^{n-1} m_j$, we compute
\[
n|V_n|^{1/n} = \frac{n}{(n!)^{1/n}} \left(\prod_{j=0}^{n-1} m_j\right)^{1/n}.
\]
By Stirling's formula, $\lim_{n \to \infty} \frac{n}{(n!)^{1/n}} = e$. Also, since $\lim_{j \to \infty} m_j = \mathrm{osc}(K)$, by Ces\`aro's theorem,
\[
\left(\prod_{j=0}^{n-1} m_j\right)^{1/n} = \exp\left(\frac{\sum_{j=0}^{n-1} \ln m_j}{n}\right) \to \mathrm{osc}(K).
\]
Therefore $\sigma = \mathrm{osc}(K)$.
\end{proof}

\begin{proof}[Proof of Theorem~\ref{thm:mk_asymptotics}]
From Lemma~\ref{lem:order-vk}, we have
\[
\rho = \limsup_{k \to \infty} \frac{\ln k}{-\ln(m_k/k)}.
\]
Rearranging and using properties of $\limsup$ and $\liminf$:
\[
\frac{1}{\rho} = \liminf_{k \to \infty} \frac{-\ln(m_k/k)}{\ln k} = \liminf_{k \to \infty} \left(1 - \frac{\ln m_k}{\ln k}\right).
\]
Therefore
\[
\limsup_{k \to \infty} \frac{\ln m_k}{\ln k} = 1 - \frac{1}{\rho} = \frac{\rho - 1}{\rho}.
\]
\end{proof}

\end{document}